\theoremstyle{plain}
\newcounter{thmcount}
\newcommand{\Z}{\mathbb{Z}}
\newcommand{\Q}{\mathbb{Q}}
\newtheorem{theorem}[thmcount]{Theorem}
\newtheorem{proposition}[thmcount]{Proposition}
\newtheorem{lemma}[thmcount]{Lemma}
\theoremstyle{definition}
\newtheorem{remark}[thmcount]{Remark}
\newtheorem*{remark*}{Remark}
\DeclareSymbolFont{cyrletters}{OT2}{wncyr}{m}{n}
\DeclareMathSymbol{\Sha}{\mathalpha}{cyrletters}{"58}
\def\Q{{\mathbb Q}}
\title{Non-isomorphic abelian varieties with the same arithmetic}
\subjclass[2020]{Primary 11G10; Secondary 14K02.}
\author{Jamie Bell}
\address{University College London, Gower Street, London, WC1E 6BT, UK}
\email{james.bell.20@ucl.ac.uk}
\address{ORCID 0000-0002-6361-2237}
\begin{document}

\begin{abstract}
    We construct two abelian varieties over $\mathbb{Q}$ which are not isomorphic, but have isomorphic Mordell--Weil groups over every number field, isomorphic Tate modules and equal values for several other invariants.
\end{abstract}

\maketitle

\section{Introduction}
The aim of this paper is to prove the following theorem.

\begin{theorem}\label{maintheorem}
    There exist abelian varieties $A$ and $B$ defined over $\Q$ which are not isomorphic to each other but satisfy the following, over every number field $F$:
    \begin{itemize}
        \item The Mordell--Weil groups $A(F)$ and $B(F)$ are isomorphic.
        \item The $n$-Selmer groups of $A$ and $B$ are isomorphic, for every positive integer $n$.
        \item The Tamagawa numbers $c_v(A)$ and $c_v(B)$ are equal, for every place $v$.
        \item The Tate--Shafarevich groups $\Sha(A)$ and $\Sha(B)$ are isomorphic.
        \item The $L$-functions $L(A/F,s)$ and $L(B/F,s)$ are equal.
        \item The conductors of $A$ and $B$ are equal.
        \item The regulators $\mathrm{Reg}(A/F)$ and $ \mathrm{Reg}(B/F)$ are equal.
        \item For every prime $\ell$, the Tate modules $T_{\ell}(A)$ and $T_{\ell}(B)$ are isomorphic.
    \end{itemize}
\end{theorem}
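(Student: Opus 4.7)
My plan is to realize $A$ and $B$ as twists of a common abelian variety $A_0/\Q$ by two non-isomorphic but everywhere-locally-isomorphic modules over a $\Z$-order $R \hookrightarrow \mathrm{End}_\Q(A_0)$. Concretely, I would look for a pair $(A_0,R)$ defined over $\Q$ for which the locally free class group of $R$ is non-trivial, and choose $M_1,M_2$ finitely generated locally-free $R$-modules lying in the same genus but in distinct isomorphism classes. The twisting functor $M \mapsto A_M := M \otimes_R A_0$, defined either through finite presentations of $M$ or as the representing object of the functor $T \mapsto M \otimes_R A_0(T)$, then yields $A := A_{M_1}$ and $B := A_{M_2}$ over $\Q$.

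I would next verify that each listed invariant depends on $M$ only through its genus. Since $M_1 \otimes_R (R \otimes \Z_\ell) \cong M_2 \otimes_R (R \otimes \Z_\ell)$ for every $\ell$, the induced identification $T_\ell A \cong T_\ell B$ is Galois-equivariant, giving equal $L$-functions and equal conductors. The N\'eron model at a place $v$ is built from $A_0$ and $M \otimes_R R_v$, so the local data (reduction type, component groups, hence Tamagawa numbers) coincide for $M_1$ and $M_2$. For Mordell--Weil, the identity $A_M(F) = M \otimes_R A_0(F)$ of $R$-modules, combined with the observation that tensoring a finitely generated $R$-module with an invertible $R$-module preserves its genus---and hence its isomorphism class as an \emph{abstract} abelian group---gives $A(F) \cong B(F)$. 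The analogous functorial description of the Selmer groups and of $\Sha$ in terms of the local and global Tate modules transports the same way, and regulator equality then follows from the Mordell--Weil identification together with the compatible height pairings on isogenous abelian varieties.

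Non-isomorphism of $A$ and $B$ is a separate step: under the equivalence of categories between locally free $R$-modules and abelian varieties isogenous to a power of $A_0$ (given by $M \leftrightarrow A_M$), an isomorphism $A \xrightarrow{\sim} B$ would produce an $R$-module isomorphism $M_1 \cong M_2$, contradicting the choice of $M_i$. The main obstacle I expect is the concrete existence question for $(R, A_0)$ over $\Q$: orders with non-trivial locally free class groups (cyclotomic, CM, quaternionic) typically act on abelian varieties only after an extension of scalars, so the real content of the construction is to exhibit $A_0/\Q$ whose \emph{$\Q$-rational} endomorphism ring already contains such an $R$. This essentially forces $A_0$ to be of dimension at least two with extra structure (real or quaternionic multiplication, or a group-algebra action), and the final technical task is to descend the twists $A_M$ from a splitting field of $R$ back down to $\Q$.
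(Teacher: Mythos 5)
Your overall strategy---two modules in the same genus but in different isomorphism classes, used to ``twist'' a fixed abelian variety, with each invariant shown to depend only on the genus---is exactly the spirit of the paper. But there is a genuine gap at the foundation of your construction, and you have flagged it yourself without resolving it: you need an order $R$ with non-trivial locally free class group acting on $A_0$ through $\mathrm{End}_\Q(A_0)$. Exhibiting such a pair over $\Q$ is not a ``final technical task'' but the crux of the problem, and the Serre-tensor route you describe (over the endomorphism ring) does not obviously admit a solution: forcing, say, $\Z[\zeta_p]$ into the $\Q$-rational endomorphism ring of an abelian variety is a strong CM-type condition, and descending the twists from a splitting field of $R$ is precisely the kind of rationality issue that can fail. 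As written, the proposal does not produce the varieties $A$ and $B$.

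The paper avoids this entirely by using a different twisting construction (Milne's): $M$ and $N$ are ideals of $\Z[\zeta_{23}]$ viewed as $\Z[G_\Q]$-modules via a surjection $G_\Q \twoheadrightarrow C_{23} = \mathrm{Gal}(L/\Q)$, and $M \otimes E$ is the Galois twist of $E^{22}$ by the cocycle $\sigma \mapsto \psi_M^{-1}\sigma\psi_M\sigma^{-1} \in \mathrm{GL}_{22}(\Z)$. No endomorphisms of $E$ beyond $\Z$ are needed---indeed the paper \emph{wants} $\mathrm{End}_{\bar\Q}(E) = \Z$, since that is what makes the converse direction (an isomorphism $M\otimes E \cong N\otimes E$ forces $M \cong N$ as Galois modules) provable by reversing the cocycle computation; your appeal to an ``equivalence of categories'' is doing unjustified work at this step. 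Two further points of comparison: the paper deduces all the listed invariants at once from a single clean statement---the existence, for every prime $\ell$, of an isogeny $A \to B$ of degree coprime to $\ell$---via a formal lemma on functors commuting with $[n]$, which is tidier than checking that each invariant transports through the tensor construction; and your claim that tensoring with a genus-equivalent module preserves the abstract isomorphism class of $A_0(F)$ is essentially what that lemma proves rigorously (by comparing $|f(A)[n]|$ and $|f(B)[n]|$ for all $n$), so it is the right idea but needs the argument.
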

In other words, if we wish to distinguish abelian varieties by their arithmetic properties, this list is insufficient.

Mazur, Rubin \cite{MR} and Chiu \cite{Chiu} have considered the related problem of finding which properties force elliptic curves to be isogenous. If $E$ and $E'$ are elliptic curves defined over a number field $K$, and $\mathrm{Sel}_p(E/L)=\mathrm{Sel}_p(E'/L)$ for all finite extensions $L/K$ and all but finitely many primes $p$, then $E$ and $E'$ are isogenous (\cite{Chiu} Thm. 1.8). However looking at a single $n$, even when we also consider the $n$-Selmer groups of the quadratic twists of $E$ and $E'$, is not enough to tell us that they are isogenous \cite{MR}. Chiu used the work of Faltings, who showed that that if $A$ and $A'$ are abelian varieties over a number field $K$, with Tate modules satisfying $T_{\ell}(A) \otimes_{\Z_{\ell}}\Q_{\ell} \cong T_{\ell}(A') \otimes_{\Z_{\ell}}\Q_{\ell}$ for some prime $\ell$, then $A$ and $A'$ are isogenous over $K$. Similarly if the local factors in their $L$-functions satisfy $L_v(A/K,s)=L_v(A'/K,s)$ for all but finitely many places $v$ of $K$, then $A$ and $A'$ are isogenous (\cite{Faltings} \S 5, Cor. 2).

\subsection*{Notation}

Let $[n]$ denote the multiplication by $n$ isogeny on an abelian variety, or the multiplication by $n$ map on an abelian group. Let $G[n]$ be the kernel of $[n]$ on $G$.

For a field $k$, let $\bar{k}$ denote its algebraic closure, and $G_k$ the Galois group of $\bar{k}/k$.

\section{Properties of \texorpdfstring{$A$ and $B$}{A and B}}
\begin{proposition}\label{mainproposition}
    Suppose $A$ and $B$ are abelian varieties over a number field $k$, and that there exist isogenies from $A$ to $B$ of degree coprime to $\ell$, for all primes $\ell$. Then $A$ and $B$ have the same properties as listed in the statement of Theorem \ref{maintheorem}, for all number fields $F$ containing $k$.
\end{proposition}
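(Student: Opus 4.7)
The unifying idea is that for each prime $\ell$, the hypothesized isogeny $\phi_\ell \colon A \to B$ of degree coprime to $\ell$ becomes an isomorphism after localizing at $\ell$, and each global invariant can then be assembled from its $\ell$-primary parts, using a different $\phi_\ell$ for each $\ell$.

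The easiest cases are the $L$-function and the conductor, both of which are isogeny invariants, so they agree immediately. For the $\ell$-adic Tate module, the map $T_\ell(\phi_\ell) \colon T_\ell(A) \to T_\ell(B)$ has kernel and cokernel annihilated by $\deg(\phi_\ell) \in \Z_\ell^\times$, and is therefore an isomorphism of $G_F$-modules.

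For the group-valued invariants I would work one prime at a time. The Mordell--Weil groups have the same rank (isogenous abelian varieties have equal rank), so it suffices to compare torsion; the restriction of $\phi_\ell$ to $\ell$-primary torsion has kernel and cokernel killed by $\deg(\phi_\ell)$, a unit at $\ell$, yielding $A(F)[\ell^\infty] \cong B(F)[\ell^\infty]$, and summing over $\ell$ gives $A(F) \cong B(F)$. The same template handles the $n$-Selmer group (which is $n$-torsion, so decomposes over primes dividing $n$, each dealt with by an appropriate $\phi_\ell$), the Tate--Shafarevich group (a torsion abelian group, hence the direct sum of its $\ell$-primary components), and the Tamagawa numbers (where the induced map on the $k_v$-rational points of the component group of the N\'eron model at each place $v$ has kernel and cokernel killed by $\deg(\phi_\ell)$; so the $\ell$-parts of $c_v(A)$ and $c_v(B)$ agree for every $\ell$, forcing equality).

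The regulator requires a slightly different, multiplicative argument: under an isogeny $\phi \colon A \to B$ of degree $d$, the ratio $\mathrm{Reg}(A/F)/\mathrm{Reg}(B/F)$ is a positive rational number whose numerator and denominator are supported on primes dividing $d$ (this uses functoriality of the canonical height under $\phi$ together with the fact that the cokernel of $\phi$ on the free parts of the Mordell--Weil groups has order dividing a power of $d$). Applying this with $\phi_\ell$ shows the ratio is an $\ell$-unit for every prime $\ell$, hence equals $1$. The main subtlety I anticipate is pinning down the precise transformation law of the regulator under an isogeny of higher-dimensional abelian varieties, where some care with the polarizations defining the canonical heights on $A$ and $B$ is needed in order to make the comparison formula clean.
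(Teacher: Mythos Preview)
Your proposal is correct and matches the paper's approach: for each prime $\ell$ the chosen isogeny becomes invertible after localizing at $\ell$, and each invariant is then assembled from its $\ell$-parts. The paper packages the recurring step as two short lemmas about functors $f$ with $f([n])=[n]$, and for the regulator it resolves the subtlety you flag by quoting the formula $\mathrm{Reg}(A)/\mathrm{Reg}(B)=\lvert\mathrm{coker}(\phi)\rvert/\lvert\mathrm{coker}(\hat\phi)\rvert$ on the free quotients of the Mordell--Weil groups of $A$ and $\hat B$ (with $\hat\phi$ the dual isogeny), so no polarization choice enters.
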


\begin{lemma}\label{lemma1}
    Suppose $f$ is a functor from abelian varieties over a fixed field $k$ to the category of abelian groups $G$ with $G[n]$ and $G/nG$ finite for all positive integers $n$. Suppose also that, for every abelian variety $A$ and every positive integer $n$, the map $f([n]): f(A) \rightarrow f(A)$ is multiplication by $n$. Then for any isogeny $\phi$ of degree coprime to $\ell$, $|\mathrm{ker}(f(\phi))|$ and $|\mathrm{coker}(f(\phi))|$ are finite and coprime to $\ell$.
\end{lemma}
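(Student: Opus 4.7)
The plan is to produce, from the isogeny $\phi: A \to B$ of degree coprime to $\ell$, a ``quasi-inverse'' isogeny $\psi:B\to A$ and then chase through the functoriality of $f$ to sandwich $\ker f(\phi)$ and $\mathrm{coker}\, f(\phi)$ between the $n$-torsion and $n$-cotorsion of $f(A)$ and $f(B)$.

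More precisely, first I would set $n=\deg\phi$, which is coprime to $\ell$ by hypothesis. Since $\ker\phi$ is a finite subgroup scheme of $A$ annihilated by $n$, it is contained in $A[n]$, so $[n]_A$ factors through $\phi$: there exists an isogeny $\psi:B\to A$ with $\psi\circ\phi=[n]_A$. Composing on the right with $\phi$ and using that $\phi$ is surjective gives $\phi\circ\psi=[n]_B$ as well.

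Next, applying the functor $f$ and using $f([n])=[n]$, I get $f(\psi)\circ f(\phi)=[n]$ on $f(A)$ and $f(\phi)\circ f(\psi)=[n]$ on $f(B)$. The first identity shows $\ker f(\phi)\subseteq f(A)[n]$, which is finite by assumption; its order divides a power of $n$ (every element has order dividing $n$), hence is coprime to $\ell$. The second identity shows $\mathrm{image}\,f(\phi)\supseteq n\cdot f(B)$, so $\mathrm{coker}\,f(\phi)$ is a quotient of $f(B)/nf(B)$, which is finite by assumption and has exponent dividing $n$, hence order coprime to $\ell$.

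The only step with any real content is the construction of $\psi$ with $\psi\phi=[n]_A$, which is the standard factorisation of $[n]_A$ through an isogeny whose kernel it contains; everything else is formal manipulation with the functor axioms and the finiteness hypotheses on $f(A)[n]$ and $f(A)/nf(A)$. I therefore expect no serious obstacle, and the proof should be quite short.
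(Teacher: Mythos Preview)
Your proof is correct and is essentially identical to the paper's own argument: the paper invokes the ``conjugate isogeny'' $\phi'$ with $\phi'\circ\phi=[\deg\phi]$ and $\phi\circ\phi'=[\deg\phi]$, which is exactly the $\psi$ you construct by factoring $[n]_A$ through $\phi$, and then draws the same conclusions about $\ker f(\phi)$ and $\mathrm{coker}\,f(\phi)$ from functoriality.
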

\begin{proof}
    This follows from the existence of conjugate isogenies. Given $\phi : X \rightarrow Y$, there exists $\phi': Y \rightarrow X$ such that $\phi' \circ \phi = [\mathrm{deg}(\phi)]$ on $X$ and $\phi \circ \phi' = [\mathrm{deg}(\phi)]$ on $Y$.

    Now $f(\phi') \circ f(\phi) = [\mathrm{deg}(\phi)]$, so $\mathrm{ker}(f(\phi)) \le \mathrm{ker}([\mathrm{deg}(\phi)])$. Because $|\mathrm{ker}([\mathrm{deg}(\phi)])|$ is coprime to $\ell$, the kernel has the required property. Similarly $f(\phi) \circ f(\phi') = [\mathrm{deg}(\phi)]$, so $\mathrm{im}(f(\phi)) \ge \mathrm{im}([\mathrm{deg}(\phi)])$ and $\mathrm{coker}(f(\phi))$ is a quotient of $\mathrm{coker}([\mathrm{deg}(\phi)])$. The conclusion follows.
\end{proof}

\begin{lemma}\label{lemma2}
    Suppose $f$ is as in Lemma \ref{lemma1}, and maps to finite groups. Suppose there exist isogenies from $A$ to $B$ of degree coprime to $\ell$, for all primes $\ell$. Then $f(A) \cong f(B)$.
\end{lemma}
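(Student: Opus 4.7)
The plan is to use Lemma \ref{lemma1} one prime at a time and then reassemble the local isomorphisms into a global one using the primary decomposition of finite abelian groups. Since $f(A)$ and $f(B)$ are assumed finite, they split canonically as direct sums of their $\ell$-primary parts $f(A)_\ell$ and $f(B)_\ell$, indexed by primes $\ell$, and to prove $f(A) \cong f(B)$ it suffices to prove $f(A)_\ell \cong f(B)_\ell$ for each $\ell$.

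Fix a prime $\ell$. By hypothesis there is an isogeny $\phi_\ell \colon A \to B$ of degree coprime to $\ell$. Lemma \ref{lemma1} applies to give that $\mathrm{ker}(f(\phi_\ell))$ and $\mathrm{coker}(f(\phi_\ell))$ are finite of order coprime to $\ell$. I would then restrict the homomorphism $f(\phi_\ell)\colon f(A)\to f(B)$ to $\ell$-primary parts: since any element of the kernel in $f(A)_\ell$ has $\ell$-power order but lies in a group of order coprime to $\ell$, it must be trivial, so $f(\phi_\ell)$ is injective on $f(A)_\ell$; dually, the cokernel of $f(\phi_\ell)|_{f(A)_\ell}\to f(B)_\ell$ is a quotient of both an $\ell$-group and a group of order coprime to $\ell$, hence trivial. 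Therefore $f(\phi_\ell)$ induces an isomorphism $f(A)_\ell \xrightarrow{\sim} f(B)_\ell$.

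Taking the direct sum of these isomorphisms over all primes $\ell$ then produces the desired isomorphism $f(A)\cong f(B)$. The main subtlety — if there is one — is just being careful that the $\ell$-primary decomposition exists and is functorial for homomorphisms of finite abelian groups, and noting that different primes $\ell$ can use different isogenies $\phi_\ell$: we never need a single isogeny that works for all $\ell$ at once, only a separate one for each $\ell$, which is exactly what the hypothesis supplies.
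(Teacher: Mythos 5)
Your proof is correct, but it takes a genuinely different route from the paper's. The paper first proves the numerical statement $|f(A)|=|f(B)|$ via the identity $|f(A)|/|f(B)| = |\mathrm{ker}(f(\phi))|/|\mathrm{coker}(f(\phi))|$, applied to one isogeny per prime; it then feeds the auxiliary functors $X \mapsto f(X)[n]$ back into the same argument to get $|f(A)[n]|=|f(B)[n]|$ for every $n$, and concludes by the structure theorem for finite abelian groups. You instead exploit the functoriality of the $\ell$-primary decomposition: $f(\phi_\ell)$ maps $f(A)_\ell$ into $f(B)_\ell$, and since kernels and cokernels of direct sums are direct sums of kernels and cokernels, the kernel and cokernel of the restriction are exactly the $\ell$-primary components of $\mathrm{ker}(f(\phi_\ell))$ and $\mathrm{coker}(f(\phi_\ell))$, which Lemma \ref{lemma1} forces to be trivial; summing the resulting isomorphisms $f(A)_\ell \xrightarrow{\sim} f(B)_\ell$ over $\ell$ finishes the proof. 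Your version is arguably more direct: it produces an explicit isomorphism assembled from the maps $f(\phi_\ell)$, needs only primary decomposition rather than the full classification of finite abelian groups, and avoids the detour through the torsion functors $f(\cdot)[n]$. What the paper's order-counting approach buys is that it adapts immediately, via Remark \ref{remark3}, to finitely generated (rather than finite) targets, which is how it is used for Mordell--Weil groups; your argument would need the extra rank step there as well. The one place to be slightly more careful in your write-up is the cokernel step: say explicitly that $\mathrm{coker}\bigl(f(\phi_\ell)|_{f(A)_\ell}\bigr)$ is the $\ell$-primary summand of $\mathrm{coker}(f(\phi_\ell))$, so it is simultaneously an $\ell$-group and of order prime to $\ell$, hence trivial.
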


\begin{proof}
    We first prove that $|f(A)|=|f(B)|$. Suppose $\phi : A \rightarrow B$ is an isogeny. Then $$\frac{|f(A)|}{|f(B)|} = \frac{|\mathrm{ker}(f(\phi))|}{|\mathrm{coker}(f(\phi))|}.$$
    By Lemma \ref{lemma1}, if we pick $\phi$ of degree coprime to a prime $\ell$, the right hand side has $\ell$-adic valuation 0. Doing this for a range of isogenies, we see that it equals 1, so $|f(A)|=|f(B)|$.

    Now consider the functor $X \rightarrow f(X)[n]$ for some integer $n$. This meets the required conditions, so $|f(A)[n]| = |f(B)[n]|$ for all $n$. By the structure theorem for finite abelian groups, this is enough to show $f(A) \cong f(B)$.
\end{proof}

\begin{remark}\label{remark3}
    The same holds if $f$ maps to finitely generated abelian groups. The groups $f(A)$ and $f(B)$ must have the same rank, as the cokernels of the maps between them are finite. Then we can apply the lemma to the torsion parts.
\end{remark}

\begin{proof}[Proof of Proposition \ref{mainproposition}]
    The Mordell--Weil groups and $n$-Selmer groups are isomorphic by a direct application of Lemma \ref{lemma2} and Remark \ref{remark3}, and so are the Tamagawa numbers as $c_v(A) = |A(k_v)/A_0(k_v)|$. The Tate--Shafarevich groups are isomorphic as they are determined by the finite groups $\Sha[\ell^n]$ for primes $\ell$, and we can apply Lemma \ref{lemma2} to these. The equality of the $L$-functions and conductors follows from the existence of an isogeny $A \rightarrow B$. 
    
    For the regulators, we will prove that given an isogeny $\phi: A \rightarrow B$, we have \[\frac{\mathrm{Reg}(A/k)}{\mathrm{Reg}(B/k)} = \frac{|\mathrm{coker}(\phi(A(k)/A(k)_{\mathrm{tors}}))|}{|\mathrm{coker}(\hat{\phi}(\hat{B}(k)/\hat{B}(k)_{\mathrm{tors}}))|},\] where $\hat{B}$ and $\hat{\phi}$ are the duals of $B$ and $\phi$. By picking isogenies, we can then show that the right hand side is coprime to any prime by Lemma \ref{lemma1} and hence the regulators are equal. To do this, recall that the regulator is defined as $|\mathrm{det}\langle a_i, \hat{a}_j \rangle|$, where $\{a_i\}$ is a lattice basis for $A(k)/A(k)_{\mathrm{tors}}$ and $\{\hat{a}_j\}$ for $\hat{A}(k)/\hat{A}(k)_{\mathrm{tors}}$, and $\langle \_, \_ \rangle$ is the height pairing. We will define $\{b_i\}$ and $\{\hat{b}_j\}$ similarly. As in (\cite{MilneADT} proof of Theorem I.7.3), functoriality of the height pairing implies that $|\mathrm{det}\langle a_i, \hat{\phi}(\hat{b}_j)\rangle| = |\mathrm{det}\langle \phi(a_i), \hat{b}_j\rangle|$. By multilinearity of the determinant and height pairing, these determinants differ from the regulators of $A$ and $B$ by factors of $|\mathrm{coker}(\hat{\phi}(\hat{B}(k)/\hat{B}(k)_{\mathrm{tors}}))|$ and $|\mathrm{coker}(\phi(A(k)/A(k)_{\mathrm{tors}}))|$ respectively and the result follows.
    
    Finally for the Tate modules $T_{\ell}(A)$ and $T_{\ell}(B)$, pick an isogeny $\phi$ of degree coprime to $\ell$. The map $[\mathrm{deg}(\phi)]$ is an isomorphism on $T_{\ell}(A)$ and $T_{\ell}(B)$, so the proof of Lemma \ref{lemma1} implies that $\phi$ induces an isomorphism of Tate modules as groups. Because $\phi$ commutes with the action of $G_k$ on points, it does on the Tate module also, so they are isomorphic as $\Z[G_k]$-modules.
 
\end{proof}

\section{Existence}
\begin{theorem}\label{secondtheorem}
    There exist abelian varieties $A$ and $B$ over $\Q$ which are not isomorphic over $\Q$, but for any prime $\ell$ there exists an isogeny between them of degree coprime to $\ell$.
\end{theorem}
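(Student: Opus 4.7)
The plan is to construct $A$ and $B$ using integral representation theory applied to an auxiliary $\Q$-abelian variety with a large endomorphism ring. The key input is the following: if $R$ is an order in a number field $K$ whose Picard group is nontrivial, and $\mathfrak{a}, \mathfrak{b}$ are fractional ideals representing distinct classes in $\mathrm{Pic}(R)$, then $\mathfrak{a} \not\cong \mathfrak{b}$ as $R$-modules globally, but $\mathfrak{a} \otimes_\Z \Z_\ell \cong \mathfrak{b} \otimes_\Z \Z_\ell$ for every prime $\ell$, because $R \otimes_\Z \Z_\ell$ is semilocal and hence has trivial Picard group. If these two modules are realized as lattices inside abelian varieties, the global non-isomorphism will prevent $A \cong B$ while each local isomorphism at $\ell$ will translate into an isogeny of degree coprime to $\ell$.

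Concretely, I would take an elliptic curve $E$ with CM by $R$ over the Hilbert class field $H$ of $K$, and set $X = \mathrm{Res}_{H/\Q}(E)$; this is an abelian variety over $\Q$ of dimension $[H:\Q]$ on which $R$ acts by $\Q$-rational endomorphisms. Via Serre's tensor construction I would define $A = X \otimes_R \mathfrak{a}$ and $B = X \otimes_R \mathfrak{b}$, where $\mathfrak{a}, \mathfrak{b}$ are chosen in distinct Galois-stable ideal classes so that $A$ and $B$ descend to $\Q$. The group $\mathrm{Hom}(A,B)$ contains $\mathrm{Hom}_R(\mathfrak{a}, \mathfrak{b}) = \mathfrak{a}^{-1}\mathfrak{b}$, and for each prime $\ell$ one picks an element of this fractional ideal that is a local unit at $\ell$: the corresponding isogeny $A \to B$ has degree coprime to $\ell$. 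The non-isomorphism $A \not\cong B$ over $\Q$ follows from the observation that a $\Q$-isomorphism would in particular induce an $R$-linear isomorphism of rational Tate modules, hence an isomorphism $\mathfrak{a} \cong \mathfrak{b}$ of $R$-modules, contradicting the choice of distinct classes.

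The main obstacle is managing the descent to $\Q$ while preserving non-isomorphism. The ideal classes must be stable under the natural $\mathrm{Gal}(H/\Q)$-action on $\mathrm{Pic}(R)$, and one must verify that the extra endomorphisms introduced by the Weil restriction do not produce a spurious $\Q$-isomorphism between $A$ and $B$. A practical starting point is $K = \Q(\sqrt{-5})$ with $\mathrm{Pic}(\mathcal{O}_K) = \Z/2\Z$, whose nontrivial class is automatically Galois-stable. If the commutative setting proves too rigid, one can replace it by a group-ring construction: take $\Z[G]$-lattices in the same genus but in different isomorphism classes (such as those produced by Reiner, Swan and others from nontrivial locally free class groups of $\Z[G]$) and realize them as homology of $G$-equivariant isogeny factors of a Jacobian defined over $\Q$, with the role of $\mathfrak{a}^{-1}\mathfrak{b}$ being played by a suitable $\mathrm{Hom}$-module between the lattices.
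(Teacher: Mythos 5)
Your first half is sound and matches the paper's strategy in spirit: take two lattices in the same genus (locally isomorphic at every prime) but in different global isomorphism classes, feed them into a tensor construction, and extract an isogeny of degree coprime to $\ell$ from a local unit in $\mathrm{Hom}_R(\mathfrak a,\mathfrak b)$. This is exactly how the paper produces the isogenies (via Milne's Prop.~6(a) applied to a module map $M\to N$ with cokernel of order coprime to $\ell$).

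The genuine gap is the non-isomorphism step, and it is not the minor verification you defer --- it is the entire difficulty of the theorem. First, a $\Q$-isomorphism $A\to B$ has no reason to be $R$-linear, so it need not induce an $R$-module isomorphism of anything. Second, even an $R$-linear isomorphism of integral Tate modules at every $\ell$ would only tell you that $\mathfrak a$ and $\mathfrak b$ lie in the same genus, which they do by construction; the global isomorphism class of a lattice is invisible to all its completions, and indeed the theorem being proved asserts that $T_\ell(A)\cong T_\ell(B)$ for all $\ell$, so Tate modules can never be the tool that separates $A$ from $B$. Your argument is therefore circular. Third, the specific CM model is in danger of collapsing outright: Serre's tensor construction commutes with Weil restriction, so $X\otimes_R\mathfrak a\cong\mathrm{Res}_{H/\Q}(E\otimes_R\mathfrak a)$, and by the main theorem of complex multiplication $E\otimes_R\mathfrak a$ is (a form of) the Galois conjugate ${}^{\sigma_{\mathfrak a}}E$, whose Weil restriction to $\Q$ is isomorphic to that of $E$; the ideal twist is absorbed, and $A$ and $B$ threaten to both be isomorphic to $X$. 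The paper avoids all of this by using a \emph{non-CM} elliptic curve $E/\Q$ and twisting by ideals of $\Z[\zeta_{23}]$ regarded as $\Z[C_{23}]$-lattices through a $C_{23}$-extension $L/\Q$; the key point (Lemma~\ref{milneconverse}) is that $\mathrm{End}_{\bar\Q}(E)=\Z$ forces every $\Q$-isomorphism $M\otimes E\to N\otimes E$ to descend to a $\Z[G_\Q]$-module isomorphism $M\to N$, which is then ruled out by the ideal classes being distinct (Lemma~\ref{lemma7}). Your closing suggestion of a group-ring construction with lattices in the same genus is essentially this route, but without an analogue of Lemma~\ref{milneconverse} it remains a sketch rather than a proof.
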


Combined with Proposition \ref{mainproposition}, this proves Theorem \ref{maintheorem}. We will do this by considering $\Z[G]$-modules, as done by Milne in \cite{Milne}.

Let $p$ be a prime, $K$ the $p^{th}$ cyclotomic field, and $\mathcal{O}_K = \Z[\zeta]$ its ring of integers, where $\zeta$ is a $p^{th}$ root of unity. Let $G$ be the group $C_p$, generated by an element $g$. Note that an ideal in $\mathcal{O}_K$ is a $\Z[G]$-module, with $g$ acting as multiplication by $\zeta$.

\begin{lemma}\label{lemma7}
    Two ideals in $\mathcal{O}_K$ are isomorphic as $\Z[G]$-modules if and only if they are in the same ideal class.
\end{lemma}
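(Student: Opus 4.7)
The plan is to reduce the $\Z[G]$-module structure question to one about $\mathcal{O}_K$-modules, and then invoke the standard correspondence between $\mathcal{O}_K$-linear isomorphisms of fractional ideals and multiplication by elements of $K^\times$. The key observation is that the $\Z[G]$-action on any ideal factors through the ring surjection $\Z[G] \twoheadrightarrow \mathcal{O}_K$ sending $g \mapsto \zeta$, so $\Z[G]$-linearity on ideals is essentially the same constraint as $\mathcal{O}_K$-linearity.

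For the easy direction, suppose $I$ and $J$ lie in the same ideal class, so $J = \alpha I$ for some $\alpha \in K^\times$. Then $x \mapsto \alpha x$ is a bijection $I \to J$ that commutes with multiplication by $\zeta$, hence with the $G$-action, giving a $\Z[G]$-module isomorphism.

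For the converse, suppose $\phi \colon I \to J$ is a $\Z[G]$-module isomorphism. Being $\Z$-linear and commuting with the action of $g$, $\phi$ commutes with multiplication by $\zeta$, and hence by $\Z$-linearity with multiplication by any element of $\Z[\zeta] = \mathcal{O}_K$. Thus $\phi$ is $\mathcal{O}_K$-linear. Tensoring with $K$ over $\mathcal{O}_K$ yields a $K$-linear isomorphism between the one-dimensional $K$-vector spaces $I \otimes_{\mathcal{O}_K} K \cong K$ and $J \otimes_{\mathcal{O}_K} K \cong K$, which must be multiplication by some $\alpha \in K^\times$. Since the original $\phi$ is the restriction of this map, we have $J = \phi(I) = \alpha I$, so $I$ and $J$ lie in the same ideal class.

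There is no real obstacle: the crux is just the observation that $\mathcal{O}_K$ is generated over $\Z$ by $\zeta$, so $\Z[G]$-linearity upgrades for free to $\mathcal{O}_K$-linearity. The only point requiring a touch of care is identifying $I \otimes_{\mathcal{O}_K} K$ with $K$ and using that $\phi$ is surjective to conclude $\alpha I$ equals $J$ and not merely some fractional ideal in the same class as $J$.
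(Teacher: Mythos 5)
Your proof is correct. The paper does not actually give an argument here---it simply cites Curtis--Reiner \S 74 for this standard fact---so you have supplied the details that the paper outsources, and your argument is the standard one behind that citation: the $G$-action generates the full $\mathcal{O}_K$-action since $\mathcal{O}_K=\Z[\zeta]$, so a $\Z[G]$-isomorphism is $\mathcal{O}_K$-linear, and an $\mathcal{O}_K$-linear isomorphism of nonzero ideals extends to a $K$-linear endomorphism of $K$, i.e.\ multiplication by some $\alpha\in K^\times$, whence $J=\alpha I$. You correctly flag the two points needing care (upgrading to $\mathcal{O}_K$-linearity, and using surjectivity of $\phi$ to get $\alpha I=J$ exactly), so nothing is missing.
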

\begin{proof}
    See Curtis--Reiner (\cite{CR} \S 74, p. 507).
\end{proof}

\begin{lemma}\label{lemma8}
    Let $M$ and $N$ be ideals in $\mathcal{O}_K$. Then $M \otimes_{\Z}\Z_{\ell} \cong N \otimes_{\Z} \Z_{\ell}$ for all primes $\ell$.
\end{lemma}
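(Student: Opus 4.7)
The plan is to exploit the fact that every fractional ideal becomes principal after completing at a prime, so that after tensoring with $\Z_\ell$ the ideal structure is completely washed out.

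First I would identify the ring in which everything lives. The tensor product $\mathcal{O}_K \otimes_\Z \Z_\ell$ is a finite flat $\Z_\ell$-algebra, and by the usual decomposition for orders in number fields it splits as
\[
\mathcal{O}_K \otimes_\Z \Z_\ell \;\cong\; \prod_{v \mid \ell} \mathcal{O}_{K_v},
\]
a finite product of complete discrete valuation rings (one factor when $\ell = p$, since $p$ is totally ramified, and otherwise one factor per prime of $K$ above $\ell$). For any fractional ideal $M \subseteq K$, the module $M \otimes_\Z \Z_\ell$ inherits an $\mathcal{O}_K \otimes \Z_\ell$-module structure, and correspondingly decomposes as $\prod_{v\mid\ell} M_v$, where $M_v := M \otimes_{\mathcal{O}_K} \mathcal{O}_{K_v}$ is a fractional ideal in the local field $K_v$.

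Next I would invoke the key local principal-ideal fact: in a DVR, every nonzero finitely generated fractional ideal is free of rank one. Hence $M_v \cong \mathcal{O}_{K_v}$ as $\mathcal{O}_{K_v}$-modules, and therefore
\[
M \otimes_\Z \Z_\ell \;\cong\; \prod_{v \mid \ell} \mathcal{O}_{K_v} \;\cong\; \mathcal{O}_K \otimes_\Z \Z_\ell
\]
as $\mathcal{O}_K \otimes \Z_\ell$-modules. The same holds for $N$, so $M \otimes \Z_\ell \cong N \otimes \Z_\ell$.

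The only thing to check is that the isomorphism respects the $\Z[G]$-action, not just the $\mathcal{O}_K$-action. But by construction $g \in G$ acts as multiplication by $\zeta \in \mathcal{O}_K$, so any $\mathcal{O}_K$-linear (equivalently $\mathcal{O}_K \otimes \Z_\ell$-linear) isomorphism is automatically $\Z[G]$-linear. There is no real obstacle here; the substantive input is purely local (principal ideals in a DVR), and the only thing to be careful about is keeping track of the module structure at each step so that the conclusion is upgraded from $\mathcal{O}_K$-modules to $\Z[G]$-modules.
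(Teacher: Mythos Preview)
Your argument is correct and follows essentially the same route as the paper: both reduce to the fact that $\mathcal{O}_K \otimes_\Z \Z_\ell \cong \Z_\ell[\zeta]$ is a (product of) DVR(s), so every ideal becomes principal after tensoring with $\Z_\ell$, and $G$-equivariance is automatic because $g$ acts through $\mathcal{O}_K$. The paper is slightly terser, phrasing the last step as ``the proof of Lemma~\ref{lemma7} works the same over $\Z_\ell$'', whereas you spell out the local decomposition $\prod_{v\mid\ell}\mathcal{O}_{K_v}$ explicitly; the substance is identical.
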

\begin{proof}
     $M \otimes_{\Z} \Z_{\ell}$ is a $\Z_{\ell}[G]$-module, and in fact it is an ideal in $\Z[\zeta] \otimes_{\Z} \Z_{\ell} \cong \frac{\Z_{\ell}[X]}{(1+X+ \ldots +X^{p-1})}$. This cyclotomic polynomial factorises into distinct irreducible factors $P_1, \ldots, P_t$ over $\Z_{\ell}$. The only prime that divides the discriminant of $1 + X + \ldots + X^{p-1}$ is $p$, so for $\ell \neq p$ the polynomials $P_1, \ldots, P_t$ are coprime. If $\ell=p$, then $1 + X + \ldots + X^{p-1}$ is irreducible so $t=1$. Therefore in either case we have \[\Z[\zeta] \otimes_{\Z} \Z_{\ell} \cong \frac{\Z_{\ell}[X]}{P_1(X)} \times \ldots \times \frac{\Z_{\ell}[X]}{P_t(X)}.\] Therefore the ideal $M \otimes_{\Z} \Z_{\ell}$ is a product of ideals $M_i$ in $\frac{\Z_{\ell}[X]}{P_i(X)}$. Considering the $\Z_{\ell}$-rank of these tells us that $M_i$ is never the zero ideal, as \[p-1 = \mathrm{rk}_{\Z}(M) = \mathrm{rk}_{\Z_{\ell}}(M \otimes_{\Z} \Z_{\ell}) = \sum_i \mathrm{rk}_{\Z_{\ell}}(M_i) \le \sum_i \mathrm{rk}_{\Z_{\ell}}\left( \frac{\Z_{\ell}[X]}{P_i(X)} \right)=p-1.\] Each ring $\frac{\Z_{\ell}[X]}{P_i(X)}$ is the ring of integers of the cyclotomic extension $\frac{\Q_{\ell}[X]}{P_i(X)}$ (\cite{Serre} Ch. IV, \S 4, Prop. 16 and 17), a local field, so is a principal ideal domain. Therefore $M_i$ is a non-zero principal ideal so is isomorphic to $\frac{\Z_{\ell}[X]}{P_i(X)}$ as $\Z_{\ell}[G]$-modules, so 
     $M \otimes_{\Z} \Z_{\ell} \cong \Z[\zeta] \otimes_{\Z} \Z_{\ell}$. The same is true for $N \otimes_{\Z} \Z_{\ell}$ so the result follows.
\end{proof}

For the construction of abelian varieties from these $\Z[G]$-modules, we follow Milne (\cite{Milne} \S 2), and use the notation of that chapter. Suppose $M$ and $N$ are ideals in $\mathcal{O}_K$. Given an abelian variety $A$ defined over $\Q$, we can construct two abelian varieties that are isomorphic over $\bar{\Q}$ to $A^{p-1}$, and isogenous to each other over $\Q$. Denote these by $M \otimes A$ and $N \otimes A$ as in Milne (\cite{Milne} \S 2), considering $M$ and $N$ as $R := \Z$-modules.

\begin{lemma}
    [= \cite{Milne} Prop. 6(a)]\label{milne} Suppose $M$ and $N$ are ideals in $\mathcal{O}_K$, and $A$ is an abelian variety over a number field $k$. Suppose $G_k$ has a quotient isomorphic to $G$, and view $M$ and $N$ as $\Z[G_k]$-modules via the action of $G$. Suppose that $\phi: M \rightarrow N$ is a module homomorphism with finite cokernel. Then $\phi_A: M \otimes A \rightarrow N \otimes A$ is an isogeny defined over $k$, and its degree is $|\mathrm{coker}(\phi)|^{2 \mathrm{dim}(A)}$.
\end{lemma}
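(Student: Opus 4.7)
The plan is to reduce the claim to the special case where $\phi$ is represented by a diagonal integer matrix, via the Smith normal form, and then compute the degree directly using the formula $\deg([n]) = n^{2\dim A}$.

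First I would check that $\phi_A$ is defined over $k$: this is immediate from the functoriality of the Serre tensor construction $M \otimes A$ in the $\Z[G_k]$-module argument, since $\phi$ is $\Z[G]$-linear and hence $\Z[G_k]$-linear via $G_k \twoheadrightarrow G$. Both $M$ and $N$ are ideals in $\mathcal{O}_K = \Z[\zeta]$, hence free of rank $p-1$ as $\Z$-modules, and the hypothesis that $\mathrm{coker}(\phi)$ is finite forces $\phi$ to be injective (the map $\phi \otimes \Q$ must then be a surjection between $\Q$-vector spaces of equal dimension, hence an isomorphism). Picking arbitrary $\Z$-bases of $M$ and $N$, I would represent $\phi$ by a matrix $T \in M_{p-1}(\Z)$; the elementary divisor theorem gives $|\det T| = |\mathrm{coker}(\phi)|$.

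Next I would pass to $\bar{k}$, where the degree of $\phi_A$ is unchanged. The chosen $\Z$-bases produce $\bar{k}$-isomorphisms $(M \otimes A)_{\bar{k}} \cong A^{p-1}_{\bar{k}} \cong (N \otimes A)_{\bar{k}}$, under which $\phi_A$ becomes the endomorphism of $A^{p-1}_{\bar{k}}$ induced by $T$. Since I am now working over $\bar{k}$ and need not respect the $G$-action any more, Smith normal form allows me to change bases further so that $T$ becomes $\mathrm{diag}(d_1, \ldots, d_{p-1})$. In this presentation $\phi_A$ is simply $[d_1] \oplus \cdots \oplus [d_{p-1}]$ on $A^{p-1}$, which is an isogeny of degree $\prod_i d_i^{2\dim A} = |\det T|^{2\dim A} = |\mathrm{coker}(\phi)|^{2\dim A}$, as required.

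The main obstacle is the identification just made: one must verify that, after trivialising $M \otimes A$ and $N \otimes A$ over $\bar{k}$ via the chosen $\Z$-bases, the morphism $\phi_A$ really is given by $T$ viewed as a matrix endomorphism of $A^{p-1}$. This is a direct unwinding of Milne's construction of $M \otimes A$ (for example via its functor of points $S \mapsto M \otimes_\Z A(S)$), but it is the only non-routine step; once it is in hand, the degree calculation collapses to Smith normal form together with $\deg([n]) = n^{2\dim A}$.
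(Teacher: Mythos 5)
Your proof is correct. Note that the paper itself offers no proof of this lemma: it is quoted directly from Milne (Prop.~6(a) of the cited paper), so there is nothing in-paper to compare against; your argument is essentially the standard one and almost certainly Milne's own. Two small remarks. First, ``immediate from the functoriality of the Serre tensor construction'' conceals the one genuinely non-trivial check, which you do flag at the end: in Milne's setup $M \otimes A$ is the twist of $A^{p-1}$ by the cocycle $s_M$ attached to a $\Z$-basis of $M$, so to see that $\phi_A$ is defined over $k$ you must verify that the integer matrix $T$ satisfies $s_N(\sigma)\, T = T\, s_M(\sigma)$ (using that $T$ has entries in $\Z$, hence is fixed by $\sigma$), which is precisely the descent condition for a morphism of twists; this is the reverse of the computation the paper performs explicitly in Lemma~\ref{milneconverse}. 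Second, the degree computation is sound: finite cokernel forces $\phi$ injective since $M$ and $N$ are free of the same $\Z$-rank, Smith normal form writes $T = UDV$ with $U,V$ unimodular (hence inducing automorphisms of $A^{p-1}_{\bar k}$), and $\deg\phi_A = \prod_i \deg[d_i] = \prod_i |d_i|^{2\dim A} = |\det T|^{2\dim A} = |\mathrm{coker}(\phi)|^{2\dim A}$.
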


The following result is a partial converse to this lemma.

\begin{lemma}\label{milneconverse}
    Suppose $A/k$ is an abelian variety with $\mathrm{End}_{\bar{k}}(A) \cong \Z$. Then if $M$ and $N$ are ideals in $\mathcal{O}_K$, viewed as $\Z[G_k]$-modules as in Lemma \ref{milne}, and $M \otimes A$ is isomorphic to $N \otimes A$ over $k$, then $M$ and $N$ are isomorphic as $\Z[G_k]$-modules.
\end{lemma}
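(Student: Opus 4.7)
The strategy is to show that, under the hypothesis $\mathrm{End}_{\bar{k}}(A) \cong \Z$, the construction $M \mapsto M \otimes A$ is a fully faithful functor from $\Z[G_k]$-modules (free of finite rank over $\Z$) to abelian varieties over $k$; an isomorphism $M \otimes A \cong N \otimes A$ over $k$ will then come from a $\Z[G_k]$-module isomorphism $M \cong N$.

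The first step is to compute morphisms over $\bar{k}$. After choosing $\Z$-bases for $M$ and $N$, both of rank $p-1$, each of $M \otimes A$ and $N \otimes A$ base-changes to $A_{\bar{k}}^{\,p-1}$. Morphisms between powers of $A_{\bar{k}}$ are matrices with entries in $\mathrm{End}_{\bar{k}}(A) \cong \Z$, which gives a canonical identification
\[
\mathrm{Hom}_{\bar{k}}(M \otimes A,\, N \otimes A) \;\cong\; \mathrm{Hom}_\Z(M,N) \otimes_\Z \mathrm{End}_{\bar{k}}(A) \;=\; \mathrm{Hom}_\Z(M,N).
\]
Under this identification, the natural $G_k$-action on the left matches the one on $\mathrm{Hom}_\Z(M,N)$ induced by the actions on $M$ and $N$: the action on $\mathrm{End}_{\bar{k}}(A)\cong\Z$ is trivial, since $A$ is defined over $k$ and $\Z$ has no nontrivial ring automorphisms. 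Taking $G_k$-invariants then yields
\[
\mathrm{Hom}_k(M \otimes A,\, N \otimes A) \;=\; \mathrm{Hom}_{\Z[G_k]}(M,N),
\]
with the bijection being $\phi \mapsto \phi_A$ in the notation of Lemma \ref{milne}.

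Given an isomorphism $\psi : M \otimes A \to N \otimes A$ over $k$, the above writes $\psi = \phi_A$ for some $\phi \in \mathrm{Hom}_{\Z[G_k]}(M,N)$. To finish, I would note that $\phi$ must have finite cokernel, for otherwise its image would span a strict subspace after tensoring with $\Q$ and $\phi_A$ would fail to be surjective. Lemma \ref{milne} then gives $\deg(\phi_A) = |\mathrm{coker}(\phi)|^{2\dim A} = 1$, so $\phi$ is surjective, and by equality of $\Z$-ranks it is a $\Z$-module isomorphism, hence also a $\Z[G_k]$-module isomorphism.

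The hardest part will be the careful justification of the $G_k$-equivariant identification of Hom-groups above, which involves unpacking Milne's construction of $M \otimes A$ and tracking how the Galois action on geometric points interacts with morphisms; once that is in place, the conclusion follows from a pure rank/degree count.
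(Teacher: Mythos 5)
Your proposal is correct and is essentially the paper's argument: both use $\mathrm{End}_{\bar{k}}(A)\cong\Z$ to identify $k$-morphisms $M\otimes A\to N\otimes A$ with $G_k$-equivariant maps $M\to N$, and the "hardest part" you defer (the equivariance of that identification) is exactly what the paper carries out via an explicit cocycle computation with Milne's $\psi_M$, $\psi_{M\otimes A}$. The only cosmetic difference is at the end, where the paper reads off that $\phi$ is an isomorphism directly from $\psi_{N\otimes A}^{-1}\phi_A\psi_{M\otimes A}\in\mathrm{GL}_n(\Z)$ rather than via your degree count with Lemma \ref{milne}.
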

\begin{proof}
        First let us fix some notation. Let $n:= p-1$. As in (\cite{Milne} \S 2), we have isomorphisms $\psi_M : \Z^n \rightarrow M$ and $\psi_{M \otimes A}: (A_{\bar{k}})^n \rightarrow (M \otimes A)_{\bar{k}}$, and similarly for $N$. Denote a choice of isomorphism $\mathrm{Aut}(\Z^n) \rightarrow \mathrm{Aut}_{\bar{k}}(A^n)$, as used in (\cite{Milne} \S 2) in the construction of $M \otimes A$ and $N \otimes A$, by $\rho$. For now, let the action of $\sigma \in G_k$ on a module or variety $X$ be written as $\chi_X(\sigma)$, and then define cocycles from $G_k$ to $\mathrm{Aut}(\Z^n)$ and $\mathrm{Aut}_{\bar{k}}(A^n)$ by
    \begin{eqnarray}
        s_M(\sigma) &=& \psi_M^{-1} \circ \chi_M(\sigma) \circ \psi_M \circ \chi_{\Z^n}(\sigma^{-1})\\
        s_{M\otimes A}(\sigma) &=& \psi_{M \otimes A}^{-1} \circ \chi_{(M \otimes A)_{\bar{k}}}(\sigma) \circ \psi_{M \otimes A} \circ\chi_{(A_{\bar{k}})^n}(\sigma^{-1})
    \end{eqnarray}
    and similarly for $N$. Note that we view $\Z^n$ as a trivial Galois module. By construction, $\rho(s_M)=s_{M \otimes A}$ and likewise for $N$. Henceforth, we will drop the notation $\chi_X$.

    Now suppose there is an isomorphism $\phi_A : M \otimes A \rightarrow N \otimes A$ defined over $k$. We will reverse Milne's construction, and show there is a $\Z[G_k]$-module isomorphism $\phi:M \rightarrow N$. Note that as $\phi_A$ is an isomorphism, $\psi_{N \otimes A}^{-1} \phi_A \psi_{M \otimes A}$ is an automorphism of $A^n$, so we can define $\phi$ by $\psi_N^{-1}\phi\psi_M = \rho^{-1}(\psi_{N \otimes A}^{-1} \phi_A\psi_{M \otimes A})$. As $\psi_N^{-1}\phi\psi_M$ is an isomorphism, so is $\phi$. 

    The fact that $\phi_A$ is defined over $k$ is equivalent to the fact that $\sigma \phi_A = \phi_A \sigma$ for all $\sigma \in G_k$. We shall prove the equivalent property for $\phi$, which implies $\phi$ is an isomorphism of $\Z[G_k]$-modules.

    Note that in our case $\psi_{N \otimes A}^{-1} \phi_A \psi_{M \otimes A}$ is an automorphism of $A^n$ defined over $\bar{k}$. However all of these are given by $\mathrm{GL}_n(\Z)$ and defined over $k$, so this map commutes with the action of $G_k$. Hence for any $\sigma \in G_k$, we have the following equality of maps $(A_{\bar{k}})^n \rightarrow (A_{\bar{k}})^n$:
    \begin{equation*}
        \begin{split}
            s_{N \otimes A}(\sigma)\psi_{N \otimes A}^{-1}\phi_A \psi_{M \otimes A} & = \psi_{N \otimes A}^{-1} \sigma \psi_{N \otimes A} \sigma^{-1} \psi_{N \otimes A}^{-1} \phi_A \psi_{M \otimes A} \\ & = \psi_{N \otimes A}^{-1} \sigma \phi_A \psi_{M \otimes A} \sigma^{-1} \\ & = \psi_{N \otimes A}^{-1} \phi_A \sigma \psi_{M \otimes A} \sigma^{-1} \\ & = \psi_{N \otimes A}^{-1} \phi_A \psi_{M \otimes A}s_{M \otimes A}(\sigma),
        \end{split}
    \end{equation*}
    where the second equality holds because $\sigma$ commutes with $\psi_{N \otimes A}^{-1} \phi_A \psi_{M \otimes A}$, and the third because it commutes with $\phi_A$.

    Now apply $\rho^{-1}$ to this to get 
    $$s_N(\sigma)\psi_N^{-1}\phi\psi_M = \psi_N^{-1}\phi \psi_M s_M(\sigma).$$ Because Galois acts trivially on $\Z^n$ this implies $$\psi_N^{-1}\sigma\phi\psi_M = \psi_N^{-1}\phi\sigma\psi_M$$ which tells us that $\phi$ commutes with $\sigma$. Hence $\phi$ is an isomorphism of $\Z[G_k]$-modules.
\end{proof}

\begin{remark}
    We can also give a proof of this in terms of cohomology. As in (\cite{Milne} \S2), the twists $A \otimes M$ and $A \otimes N$ of $A^n$ correspond to cohomology classes $c_{A \otimes M}$ and $c_{A \otimes N}$ in $H^1(G_k, \mathrm{Aut}((A_{\bar{k}})^n))$. If the twists are isomorphic over $k$, $c_{A \otimes M} = c_{A \otimes N}$. Similarly, the modules $M$ and $N$ are twists of the trivial $G_k$-module $\Z^n$, and define cohomology classes $c_M$ and $c_N$ in $H^1(G_k, \mathrm{Aut}(\Z^n))$. The map $\rho$ identifies $\mathrm{Aut}(\Z^n)$ and $\mathrm{Aut}((A_{\bar{k}})^n)$ as groups, and also as $G_k$-modules because all of the elements of $\mathrm{Aut}((A_{\bar{k}})^n)$ are defined over $k$, so the action of $G_k$ on both groups is trivial. By construction, under this isomorphism the classes $c_{A \otimes M}$ and $c_{A \otimes N}$ correspond to $c_M$ and $c_N$ respectively, so $c_M=c_N$. Therefore $M$ and $N$ are isomorphic as $G_k$-modules.
\end{remark}

\begin{proof}[Proof of Theorem \ref{secondtheorem}]
    Let $M$ and $N$ be two ideals of $\Z[\zeta_{23}]$ in different ideal classes (23 can be replaced by any prime such that $\Z[\zeta_{23}]$ has non-trivial class group). Then, by Lemmas \ref{lemma7} and \ref{lemma8}, they are not isomorphic as $\Z[C_{23}]$-modules but $M \otimes \Z_{\ell} \cong N \otimes \Z_{\ell}$ for all primes $\ell$. This implies $M \otimes \Z_{(\ell)} \cong N \otimes \Z_{(\ell)}$ (\cite{CR} Cor. 76.9). Therefore there is an injective homomorphism $M \rightarrow N$ with finite cokernel of order coprime to $\ell$.

    Now pick an elliptic curve $E$ over $\mathbb{Q}$, with no potential complex multiplication. Pick a number field $L$ with $\mathrm{Gal}(L/\Q) \cong C_{23}$, and let $G_{\Q}$ act on $M$ and $N$ via the corresponding $C_{23}$ quotient. Then $A := M \otimes E$ and $B := N \otimes E$ are related by an isogeny of degree coprime to $\ell$ for each $\ell$ (by Lemma \ref{milne}), but are not isomorphic (by Lemma \ref{milneconverse}).
\end{proof}
\begin{remark}
    $M \otimes E$ and $N \otimes E$ are defined over $\mathbb{Q}$, and isomorphic over $\bar{\Q}$ to $E^{22}$. In fact, they are isomorphic over $L$, because $M$ and $N$ are isomorphic as $\Z[G_L]$-modules, where $G_L$ acts trivially.
\end{remark}

\begin{remark}
    For elliptic curves over $\Q$, the Tate modules determine the curve up to isomorphism. This follows from the fact that $\mathrm{Hom}_{\Z_p[G_{k}]}(T_p(E),T_p(E')) \cong \Z_p \otimes \mathrm{Hom}(E,E')$ (\cite{Faltings} \S 5, Cor. 1). Indeed, if the Tate modules $T_p(E)$ and $T_p(E')$ are isomorphic, then $E$ and $E'$ are isogenous, and over $\Q$ we must have $\mathrm{Hom}_\Q(E,E') \cong \Z$ (if it were larger we would have complex multiplication defined over $\Q$). Let this be generated by $\phi$. Now pick $\Z_p$-bases for the Tate modules, and consider the determinant of the $\Z_p$-linear map on $T_p(E)$ induced by $\phi$. While the determinant depends on the choice of bases, its $p$-adic valuation does not. If the degree of $\phi$ is divisible by $p$, then this determinant is also divisible by $p$. Hence the same is true for the maps induced by every element of $\Z_p \otimes \mathrm{Hom}(E,E')$, so none of them gives an isomorphism of Tate modules, and we must have $p \nmid \mathrm{deg}(\phi)$. If this holds for all $p$, then $\phi: E \rightarrow E'$ is an isomorphism.
\end{remark}

\subsection*{Acknowledgements}
I would like to thank my supervisor Vladimir Dokchitser for suggesting this problem, and his guidance in solving it. I would also like to thank Dominik Bullach for his helpful discussions about integral representation theory, and the referees for their comments on the paper.

This work was supported by the Engineering and Physical Sciences Research Council [EP/L015234/1], the EPSRC Centre for Doctoral Training in Geometry and Number Theory (The London School of Geometry and Number Theory) at University College London.

\end{document}